\date{}
\newtheorem{theorem}{Theorem}[section]
\DeclareMathOperator{\STS}{STS}
\newcommand{\lmax}{\ell_{\mathrm{max}}}
\newcommand{\lcmax}{\ell_{\mathrm{cmax}}}
\begin{document}
\title{Good point sequencings of Steiner triple systems\\}
\author{Grahame Erskine and Terry S. Griggs\\
        School of Mathematics and Statistics\\
        The Open University\\
        Walton Hall\\
        Milton Keynes MK7 6AA\\
        UNITED KINGDOM\\
        {\sf grahame.erskine@open.ac.uk}\\
        {\sf terry.griggs@open.ac.uk}}
\maketitle

\begin{abstract}
An $\ell$\textit{-good sequencing} of a Steiner triple system of order $v$, $\STS(v)$, is a permutation of the points of the system such that no $\ell$ consecutive points in the permutation contains a block. It is known that every $\STS(v)$ with $v > 3$ has a 3-good sequencing. It is proved that every $\STS(v)$ with $v \geq 13$ has a 4-good sequencing and every 3-chromatic $\STS(v)$ with $v \geq 15$ has a 5-good sequencing. Computational results for Steiner triple systems of small order are also given.
\end{abstract}

\noindent\textbf{AMS classification:} 05B07.\\
\noindent\textbf{Keywords:} Steiner triple system, good sequencing.

\section{Introduction}\label{sec:Introduction}
The concept of a good sequencing of the points of a Steiner triple system was introduced by Kreher \& Stinson in \cite{KS}. Let $(V,\mathcal{B})$ be a Steiner triple system where $V$ is the set of points and $\mathcal{B}$ is the set of blocks. The \textit{order} of the Steiner triple system is $v = \vert V \vert$, the cardinality of V. Denote such a system by $\STS(v)$. It is well known that Steiner triple systems exist if and only if $v \equiv 1$ or $3 \pmod {6}$. Let $\ell \ge 3$ be an integer. An $\ell$\textit{-good sequencing} of an $\STS(v)$ is a permutation of the points of $V$  such that no $\ell$ consecutive points in the permutation contains a block of the system. Clearly if an $\STS(v)$ has an $(\ell + 1)$-good sequencing then it has an $\ell$-good sequencing, $\ell \ge 3$. In \cite{KS}, the authors proved that every $\STS(v)$ with $v > 3$ has a 3-good sequencing and with $v > 71$ has a 4-good sequencing. The theory of good sequencings was further developed in \cite{SV}. Denote by $\lmax$, the maximum value of $\ell$ for which an $\STS(v)$ can have an $\ell$-good sequencing. We have the following result.
\begin{theorem}[Stinson \& Veitch]\label{thm:max}
~\\
\indent
$\lmax\leq 2s$ if $v = 6s+1, s \ge 2$,
and $\lmax\leq 2s+1$ if $v = 6s+3, s \ge 1$.
\end{theorem}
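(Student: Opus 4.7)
Suppose $\pi = p_1, p_2, \ldots, p_v$ is an $\ell$-good sequencing of an $\STS(v)$. The condition is equivalent to saying that for every block $B$, its three points appear at positions $i_1 < i_2 < i_3$ satisfying $i_3 - i_1 \geq \ell$; that is, no block is contained in any interval of $\ell$ consecutive positions.

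My first move would be a double count based on the initial window $W_1 = \{p_1, \ldots, p_\ell\}$. Since $W_1$ consists of $\ell$ consecutive points, it is an independent set of the $\STS(v)$; hence each of the $\binom{\ell}{2}$ pairs of $W_1$ determines a unique block whose third point lies at some position $x \in \{\ell+1, \ldots, v\}$. Writing $x = \ell + k$ with $k \in \{1, \ldots, v-\ell\}$ and letting $t_x$ denote the number of blocks through $p_x$ whose other two points lie in $W_1$, one has
\[
\sum_{k=1}^{v-\ell} t_{\ell+k} \;=\; \binom{\ell}{2}.
\]
The span hypothesis $i_3 - i_1 \geq \ell$ forces each such block to have minimum position at most $k$, so at least one of its two $W_1$-points lies in $\{p_1, \ldots, p_k\}$. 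Translating this into a restriction on the perfect matching formed by the $(v-1)/2$ pairs of partners of $p_x$, one obtains a bound of the form $t_x \leq \min(k, \lfloor \ell/2\rfloor)$, and plugging back in yields an inequality
\[
\tfrac{\ell(\ell-1)}{2} \;\leq\; \sum_{k=1}^{\lfloor\ell/2\rfloor} k \;+\; \bigl(v-\ell-\lfloor\ell/2\rfloor\bigr)\lfloor \ell/2\rfloor,
\]
which simplifies to a bound of order $\ell \lesssim (v-1)/3$.

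The genuine obstacle is sharpening the constants to obtain exactly $\lmax \leq 2s$ or $\lmax \leq 2s+1$: a crude version of the argument (for instance, counting only the blocks through $p_1$, all of which must have maximum position $\geq \ell+1$) yields the much weaker bound $\ell \leq (v+1)/2$. To recover the claimed bound I would combine the analysis at $W_1$ with the symmetric information from the terminal window $W_{v-\ell+1} = \{p_{v-\ell+1}, \ldots, p_v\}$, noting that for middle positions $x \in \{\ell+1, \ldots, v-\ell\}$ the point $p_x$ is simultaneously constrained as a "third'' for both windows, and that the parity of the replication number $(v-1)/2$ differs in the two residue classes modulo $6$. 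The harder half of the proof is exactly this parity/integrality analysis, which should distinguish $v\equiv 1$ from $v\equiv 3\pmod 6$ and yield the stronger bound $\lfloor \ell/2\rfloor \leq s$ respectively $\leq s$ — giving $\ell \leq 2s$ and $\ell \leq 2s+1$ in the two cases.
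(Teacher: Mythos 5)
First, a point of order: the paper does not prove Theorem~\ref{thm:max} at all --- it is quoted from Stinson \& Veitch \cite{SV} --- so there is no internal proof to compare against. Judged on its own terms, your proposal contains a genuine gap, and it is not the mere ``sharpening of constants'' that your last paragraph suggests.

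Your set-up is sound: $W_1$ is an independent set, each of its $\binom{\ell}{2}$ pairs has its third point at some position $x=\ell+k$, the blocks through $p_x$ meeting $W_1$ twice form a matching, and the span condition forces each such edge to have its smaller endpoint among the first $k$ positions, giving $t_{\ell+k}\le\min(k,\lfloor\ell/2\rfloor)$. The error is in what you claim this yields. Your displayed inequality does \emph{not} simplify to ``$\ell\lesssim(v-1)/3$'': for $\ell=2m$ it reads $m(2m-1)\le m(m+1)/2+(v-3m)m$, i.e.\ $9m\le 2v+3$, so it only gives $\ell\le(4v+6)/9$, a worse leading constant than $1/3$. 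Concretely, at $v=13$ the inequality is $10\le15$ for $\ell=5$ and $15\le18$ for $\ell=6$, so it excludes nothing below $\ell=7$, whereas the theorem asserts $\lmax\le4$. The single-window count is therefore off by a multiplicative factor, not an additive one.

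The proposed repair cannot close a gap of this kind. Adding the symmetric count at the terminal window doubles the demand to $2\binom{\ell}{2}$ but more than doubles the supply: a block with two points among the first $\ell$ positions and one among the last $\ell$ positions spans at least $v-2\ell+2>\ell$ positions whenever $v\ge 3\ell-1$, and the residual corner constraints below that threshold are negligible, so each endpoint of the sequence still absorbs up to $\lfloor\ell/2\rfloor$ pairs of the opposite window. (Again at $v=13$, $\ell=5$: the combined capacity works out to $30$ against a demand of $20$.) A parity analysis of the replication number $(v-1)/2$ can only shave an additive constant off a bound; it cannot convert a coefficient of $4/9$ (or $2/5$) into $1/3$. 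To reach $\lfloor v/3\rfloor$ one needs an idea that couples the matchings arising at \emph{all} positions of the sequence simultaneously rather than auditing one or two windows in isolation; this is the substance of the argument in \cite{SV}, and it is precisely the step your outline defers.
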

A further paper on good sequencings is \cite{BE}. The authors prove the following theorem. 
\begin{theorem}[Blackburn \& Etzion]\label{thm:l^4}
~\\\vspace{-12pt} 
\[ For~~v \geq \left(2\ell + 3\binom{\ell - 1}{2}\right)\binom{\ell-1}{2} + \ell = (3\ell^4 - 14\ell^3 + 27\ell^2 - 24\ell + 12)/4, \]
all $\STS(v)$ have an $\ell$-good sequencing.  
\end{theorem}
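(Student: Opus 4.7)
The plan is a greedy construction of the permutation $p_1,p_2,\ldots,p_v$ from left to right. Set $m=\binom{\ell-1}{2}$. The essential observation is that when $p_k$ is added, the only new $\ell$-window is $\{p_{k-\ell+1},\ldots,p_k\}$, and this contains a block precisely when, for some pair $\{p_i,p_j\}$ with $k-\ell+1\le i<j\le k-1$, the unique third point completing a block through $p_i,p_j$ equals $p_k$. Thus at each step $p_k$ is ``forbidden'' by at most $m$ specific points, in addition to the $k-1$ previously placed points.

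First I would run the naive greedy. As long as $v-(k-1)>m$, there is always an unused, unforbidden choice, so the construction proceeds unproblematically for $k\le v-m$. The entire difficulty is concentrated in the last roughly $m$ positions, where the remaining unused points may conspire to coincide exactly with the forbidden set and leave no legal move.

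To defeat this obstruction I would reserve, before running the greedy, a \emph{tail set} $F\subseteq V$ of size about $\ell-1$ to occupy the final positions, together with a buffer of auxiliary points of size $O(m)$. The greedy is then performed on $V\setminus F$ for positions $1,\ldots,v-|F|$, but with a strengthened forbidden list: in addition to the $m$ standard window blocks, we also forbid, in the transition region near position $v-|F|$, any point $x$ whose placement would create a block in an overlap window that extends into $F$. A direct count shows this adds at most $O(\ell m)$ further forbidden points per step in the transition, yielding a total forbidden count comparable to $(2\ell+3m)$ per step. Requiring the number of unused candidates to exceed this forbidden count at every step, together with the $\ell$ reserved positions, gives the explicit bound $v\ge (2\ell+3m)m+\ell$ of the hypothesis. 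The tail $F$ is then arranged internally: since $|F|<2\ell$, only a bounded number of windows overlap $F$ or lie inside it, and the constraints already built into the greedy phase allow the points of $F$ to be ordered so that each such window avoids a block.

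The main obstacle is the bookkeeping at the transition between the greedy prefix and the reserved tail, where window constraints from the current placement, threats from pairs reaching forward into $F$, and the internal structure of $F$ must all be balanced simultaneously. Verifying that the available-point count stays strictly positive at every step---including the very last, where the margin is thinnest and the coefficients $2\ell+3m$ become tight---is where all the slack afforded by the lower bound on $v$ is consumed.
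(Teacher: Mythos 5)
First, a point of comparison: the paper does not prove this statement. It is quoted from Blackburn \& Etzion \cite{BE}, so there is no internal proof to measure your attempt against; I judge it on its own terms (and against the genuinely related argument the paper does give, Theorem~\ref{thm:ind}). The first half of your proposal --- the naive left-to-right greedy, with at most $m=\binom{\ell-1}{2}$ forbidden ``third points'' per step, succeeding as long as more than $m$ points remain unused --- is correct and standard, but it is also the easy half; the entire content of the theorem lies in the endgame, and there the proposal has genuine gaps. (i) The tail set $F$: you never say how $F$ is chosen, and nothing in your setup prevents $F$ from containing a block of the $\STS(v)$; if $|F|=\ell$, such a block defeats every internal ordering of $F$, and for $\ell\leq|F|<2\ell$ you give no argument that the block can be spread across the tail windows. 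Compare Theorem~\ref{thm:ind} of this paper, which is exactly a reserved-tail argument but insists that the tail be an \emph{independent set} (and must separately establish that one of the required size exists) precisely to remove this obstruction. (ii) Reserving $F$ ``before running the greedy'' is circular: which points are dangerous at the junction depends on the order of the last $\ell-1$ greedy points, which is unknown until the greedy has run. (iii) The quantitative accounting does not close: you assert that ``a direct count shows'' at most $O(\ell m)$ extra forbidden points per step and that this is ``comparable to $2\ell+3m$,'' but $\ell m=\Theta(\ell^{3})$ while $2\ell+3m=\Theta(\ell^{2})$; the stated threshold $v\geq(2\ell+3m)m+\ell$ is therefore reverse-engineered from the statement rather than derived, and an asymptotic estimate cannot in any case produce the exact polynomial claimed.

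The deeper problem is that the one real obstruction --- that when roughly $m$ points remain they may coincide exactly with the forbidden set --- is correctly identified in your first paragraph and then merely relocated into the sentence asserting that ``the constraints already built into the greedy phase allow the points of $F$ to be ordered so that each such window avoids a block.'' That sentence is essentially equivalent to what had to be proved. To repair the approach you would need either (a) to take $F$ to be an independent set of explicitly bounded size whose existence you establish, together with a concrete junction analysis as in the proof of Theorem~\ref{thm:ind} (which, note, is carried out only for $\ell=4$ and $v\geq19$), or (b) to drop the reserved tail and instead, when the greedy stalls with at most $m$ leftover points, insert each leftover point into the interior of the already-built sequence, explicitly bounding the number of insertion slots ruled out for each such point; it is bookkeeping of that exact, non-asymptotic kind that yields a bound of the shape $(2\ell+3m)m+\ell$. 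As written, the proposal is a plan that names its hardest step without carrying it out.
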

\noindent The theorem applies also to partial Steiner triple systems.  Substituting the value $\ell = 4$ improves the bound given in \cite{KS} for an $\STS(v)$ to have a 4-good sequencing to $v \geq 55$. The quartic in $\ell$ has since been improved to a quadratic \cite{HC}, but this yields no further improvement for the case of $\ell = 4$, the main focus of this paper.   

In~\cite{BE}, the authors also extend the idea of a sequencing to that of a cyclic sequencing and establish a smaller upper bound on the maximum value of $\ell$ in that case. Denote this upper bound by $\lcmax$.
\begin{theorem}[Blackburn \& Etzion]\label{thm:cycmax}
~\\
\indent
$\lcmax\leq 0.329v +\mathcal{O}(1)$.
\end{theorem}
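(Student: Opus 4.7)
The plan is a refined double count of pairs and $\ell$-windows. Identify $V$ with $\mathbb{Z}/v\mathbb{Z}$ via the putative cyclic $\ell$-good sequencing, and for each block $B$ write its three arc lengths as $a_1(B)\le a_2(B)\le a_3(B)$ with $a_1+a_2+a_3=v$. The $\ell$-good condition is equivalent to $a_3(B)\le v-\ell$ for every block, so that the longest of the three arcs $B$ carves out of the cycle has length at most $v-\ell$.

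Step 1 is the pair--window identity. For each cyclic distance $d\in\{1,\ldots,\ell-1\}$ there are exactly $v$ pairs of points at distance $d$, and each lies in exactly $\ell-d$ of the $v$ windows of size $\ell$. Since each pair is in a unique block and no $\ell$-window contains all three points of a block, summing the (pair, window) incidences both ways yields
\[
\sum_{B\in\mathcal{B}}\sum_{\{p,q\}\subset B}\max\!\bigl(0,\,\ell-d(p,q)\bigr)\;=\;v\binom{\ell}{2}.
\]

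Step 2 is a sharp per-block bound. Split blocks into \emph{balanced} ($a_3\le v/2$) and \emph{clustered} ($a_3>v/2$). Using $a_3\le v-\ell$, one checks that every block contributes at most $\ell-1$ to the left-hand side \emph{except} for \emph{critical} clustered blocks with $a_1+a_2=\ell$, whose contribution is exactly $\ell$. Writing $n_+$ for the number of critical blocks, this gives
\[
v\binom{\ell}{2}\;\le\;(\ell-1)|\mathcal{B}|\,+\,n_+,
\]
and hence $n_+\ge v(\ell-1)(3\ell-v+1)/6$.

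Step 3 is the upper bound on $n_+$. Each critical block contains a pair at cyclic distance exactly $\ell$; since there are only $v$ such pairs and each lies in a unique block, $n_+\le v$. Combining Steps 2 and 3 gives the polynomial inequality $(\ell-1)(3\ell-v+1)\le 6$, yielding $\ell\le v/3+O(1)$. Pushing below the round constant $1/3$ to reach $0.329$ is the hardest step: it demands sharpening Step 2 by ruling out the simultaneous accumulation of critical blocks and near-critical blocks (those whose contribution is $\ell-1$). A careful optimization over the joint distribution of short arc lengths, exploiting the STS property that the $(v-1)/2$ blocks through each point partition the rest of $V$ into pairs with constrained distances, is required to produce the explicit constant $0.329$ and complete the proof.
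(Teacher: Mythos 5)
This theorem is quoted by the paper from Blackburn \& Etzion \cite{BE} without proof, so there is no internal argument to compare against; your proposal has to stand on its own, and it does not. Your Steps 1--3 are sound: the pair--window double count, the per-block contribution bound of $\ell$ (attained only by clustered blocks with $a_1+a_2=\ell$), and the bound $n_+\le v$ via the $v$ pairs at cyclic distance $\ell$ all check out, and together they give $(\ell-1)(3\ell-v+1)\le 6$, hence $\ell\le v/3+\mathcal{O}(1)$. The problem is that this is not the theorem. The entire content of the statement is the constant $0.329<1/3$: indeed Theorem~\ref{thm:max} already gives $\lmax\le (v-1)/3$ for $v=6s+1$, and since a cyclic $\ell$-good sequencing is in particular an $\ell$-good sequencing, $\lcmax\le v/3+\mathcal{O}(1)$ follows with no work at all. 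So your argument reproves something the paper already has, and the actual claim is deferred entirely to your final sentences.

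Those final sentences are a declaration of intent, not a proof. ``Ruling out the simultaneous accumulation of critical and near-critical blocks'' and ``a careful optimization over the joint distribution of short arc lengths'' name a strategy but supply no inequality, no extremal configuration, and no derivation of where $0.329$ comes from --- it is not even clear from your sketch whether that constant would emerge as the root of some polynomial, the value of a linear program, or something else. Within your own framework the obstruction is concrete: your Step 2 bound is tight for a large family of arc-length profiles (all clustered blocks with $a_1+a_2\in\{\ell,\ell+1\}$ contribute $\ell$ or $\ell-1$), and nothing you have written prevents a hypothetical sequencing from realising $n_+=v$ critical blocks together with the maximum number of near-critical ones. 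To beat $1/3$ you must extract additional constraints (e.g.\ by counting pairs at each individual small distance separately, or by exploiting that the $(v-1)/2$ blocks through a point induce a perfect matching on the remaining points and hence constrain the multiset of arc lengths), and that extraction is precisely the missing proof. As it stands the proposal establishes a weaker, already-known bound and leaves the stated theorem unproven.
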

Clearly if an $\STS(v)$ has an $\ell$-good  cyclic sequencing then it has an $\ell$-good sequencing, $\ell \ge 3$. In \cite{KS}, Kreher \& Stinson give three proofs that every $\STS(v)$ with $v > 3$ has a 3-good sequencing. One of these, which they attribute to Charlie Colbourn, is particularly elegant and is surely a proof from the BOOK \cite{BOOK}. Consider an $\STS(v)$ with $v \geq 7$. Label the points with the numbers $1,2,\ldots,v$ so that the blocks containing $1$ are
\[ \{1,2,v\}, \{1,3,4\},\ldots,\{1,v-2,v-1\}. \]
Then $1,2,\ldots,v-1,v$ is a 3-good sequencing. Unfortunately it is not a 3-good cyclic sequencing because $\{1,2,v\}$ is a block. However the sequence $1,3,2,4,\ldots,v-1,v$ is a cyclic 3-good sequencing unless $\{2,4,5\}$ is a block in which case interchange the two labels 5 and 6.
We state this formally as a theorem.
\begin{theorem}\label{thm:cyc3}
Every $\STS(v)$ with $v > 3$ has a cyclic 3-good sequencing.
\end{theorem}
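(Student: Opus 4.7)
My plan is to adapt Colbourn's elegant labeling argument for the non-cyclic case, handling any residual failure by a small relabeling. First, I pick an arbitrary point and call it $1$; since the $(v-1)/2$ blocks through $1$ partition $V \setminus \{1\}$ into pairs, I can label the other points $2, 3, \ldots, v$ so that the blocks through $1$ are exactly $\{1,2,v\}, \{1,3,4\}, \{1,5,6\}, \ldots, \{1,v-2,v-1\}$. My candidate cyclic sequencing is
\[ \sigma = 1, 3, 2, 4, 5, 6, \ldots, v-1, v, \]
and I must verify that no three cyclically consecutive entries of $\sigma$ form a block.

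The verification is a short case analysis. For each interior triple $\{i, i+1, i+2\}$ with $i \ge 3$, one of the pairs $\{i, i+1\}$ or $\{i+1, i+2\}$ lies in a block through $1$ (depending on parity), so by the Steiner property two distinct blocks share at most one point, and the triple itself cannot be a block. The triple $(3, 2, 4)$ is killed by the block $\{1, 3, 4\}$. The three triples containing $1$, namely $(1,3,2)$, $(v-1,v,1)$ and $(v,1,3)$, are ruled out by inspection against the blocks $\{1,2,v\}$ and $\{1,3,4\}$, using the hypothesis $v > 3$. The only triple not immediately ruled out corresponds to the set $\{2, 4, 5\}$.

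If $\{2, 4, 5\}$ is not a block, we are done. Otherwise I interchange the labels $5$ and $6$: the list of blocks through $1$ remains in the required form, since $\{1, 5, 6\}$ is symmetric in these two labels. In the new labeling, the previously bad triple refers to the point set $\{2, 4, 6\}$ of the old labeling, which is not a block because the unique block through the pair $\{2, 4\}$ is already $\{2, 4, 5\}$. The only other triples of $\sigma$ affected by the swap are $(4, 5, 6)$, $(5, 6, 7)$ and $(6, 7, 8)$, and each is immediately ruled out by one of the blocks $\{1, 5, 6\}$ or $\{1, 7, 8\}$.

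I do not anticipate a genuine obstacle: once the right labeling is chosen the argument is largely bookkeeping. The only points requiring care are the cyclic wrap-around triples $(v-1,v,1)$ and $(v,1,3)$, which must be checked separately from the generic interior triples, and the verification that the $5$--$6$ swap does not inadvertently create a new bad triple elsewhere in $\sigma$; both are handled by the explicit form of the blocks through~$1$.
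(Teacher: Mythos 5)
Your proposal is correct and follows exactly the paper's argument (attributed to Colbourn and sketched in the introduction): the same labelling of the blocks through $1$, the same candidate sequence $1,3,2,4,\ldots,v-1,v$, and the same $5$--$6$ label swap in the exceptional case where $\{2,4,5\}$ is a block. You have merely written out the cyclic wrap-around and swap verifications in more detail than the paper does.
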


The main purpose of this paper is to complete the spectrum of $v$ for which there exists an $\STS(v)$ with a 4-good sequencing. These exist for all $v \geq 13$. We are also able to prove that every 3-chromatic $\STS(v)$ with $v \geq 15$ has a 5-good sequencing. These results follow from two constructions which are the subject of Section \ref{sec:Constructions}. The results themselves are proved in Section {\ref{sec:Results}. Finally in Section \ref{sec:Computations} we present some computational results about good sequencing and cyclic good sequencing of Steiner triple systems of small order.      

\section{Constructions}\label{sec:Constructions}
The first construction relates just to 4-good sequencings. In the proof,
and indeed throughout the rest of the paper, it will be convenient to use algebraic (quasigroup) notation, i.e. the third point in the block containing the points $\alpha$ and $\beta$ will be denoted by $\alpha . \beta$.
\begin{theorem}\label{thm:ind}
Let $v\geq 19$ and let $S$ be an $\STS(v)$ with an independent set $I$ of cardinality 8. Then $S$ admits a 4-good sequencing.
\end{theorem}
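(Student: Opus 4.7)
The plan is to place the eight points of $I=\{b_1,\ldots,b_8\}$ as buffers at the two ends of the sequence --- four of them at positions $1,2,3,4$ and the other four at positions $v-3,v-2,v-1,v$ --- and to fill the $v-8\ge 11$ middle positions $5,6,\ldots,v-4$ with the outside points $X:=V\setminus I$ by a greedy procedure working inwards from both ends. Since $I$ is independent, every 4-window lying wholly within one of the two end-runs consists of four points of $I$ and hence contains no block; this provides safe buffers at both ends of the sequence, so that the only 4-windows whose block-freeness is in question are those entering or straddling the middle region.

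For the middle I would run greedy insertion from the left (starting at position $5$) and from the right (starting at position $v-4$). When a new outside point is placed at position $p$, the 4-window consisting of $p$ together with its three already-placed neighbours $u,w,z$ forbids only three values in $X$, namely the third points $u.w$, $u.z$, $w.z$ of the three blocks on pairs drawn from those neighbours. A direct count shows that each greedy sweep proceeds safely while the pool of unused outside points exceeds $3$; together with the bound $|X|\ge 11$ this means that the combined sweeps can cover all but exactly three of the middle positions (for every $v\ge 19$), reducing the problem to a small residual gap of three middle slots.

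The main obstacle, and the main work of the proof, is to complete this residual gap of three middle positions while keeping every 4-window straddling the gap block-free. I would address this by a case analysis, placing the three reserved outside points into the gap in a suitable order and verifying the handful of 4-windows that link the gap to the two greedy regions. The argument exploits the substantial freedom available in the construction: the $\binom{8}{4}(4!)^2$ ways to split and order $I$ across the two ends, the choice of which outside points to hold in reserve for the gap, and the $3!$ permutations of the residual points within the gap. The bound $v\ge 19$ is precisely what guarantees enough slack for this final case analysis to succeed; a small Hall-type matching between the three residual positions and the three reserved outside points (with forbidden assignments prescribed by the window constraints) is a natural way to formalise this step, and is where I expect the bulk of the technical work to lie.
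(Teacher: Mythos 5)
Your overall framework (use the independence of $I$ to create block-free buffer windows, fill the rest of the sequence greedily with three exclusions per step) is the right one, but the proposal has a genuine gap exactly where you say you ``expect the bulk of the technical work to lie'': the residual gap of three middle positions is never actually shown to be fillable, and the difficulty there is real, not routine. The three reserved points $g_1,g_2,g_3$ must be inserted between two already-completed runs of points of $X=V\setminus I$, so every 4-window straddling the gap lies entirely in $X$ and none of them is automatically safe. The constraints are not of the single-forbidden-assignment type that a Hall-type bipartite matching handles: for instance, if $\{g_1,g_2,g_3\}$ happens to be a block, no ordering of the gap works at all; if $g_i.g_j$ equals one of the adjacent placed points $z$ or $z'$, entire families of orderings fail simultaneously. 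These are constraints on pairs and triples of assignments, and you give no argument that the freedom in splitting $I$ or in choosing which points of $X$ to reserve is enough to avoid them. As written, the proof does not go through.

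The paper's construction is arranged precisely so that no such meet-in-the-middle ever occurs inside $X$. It keeps all eight points of $I$ as a single contiguous run near the end of the sequence, so that when that run is filled greedily from its two ends, the windows where the two fills meet lie wholly inside the independent set and are block-free for free. The leftover problem in $X$ is handled differently too: the fill of $X$ goes left to right only, stopping with three points $x,y,z$ remaining; by pre-selecting the initial block $\{a,b,e\}$ to lie wholly in $T=V\setminus I$, the window formed by prepending a point before $a,b,c$ excludes only the two points $a.c$ and $b.c$ (since $a.b=e$ is already placed), so one of the three leftovers can always go at the front, and the other two are appended after the run of $I$-points, where only the single exclusion $x.y$ and then a couple more need to be checked. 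If you want to salvage your two-buffer layout, you would need either to prove the gap-filling step (which will require controlling which three points of $X$ are left over, not just how they are ordered) or to rearrange so that the two sweeps meet inside $I$ rather than inside $X$.
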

\begin{proof}
The strategy uses the concept of an independent set, the points of which can be assigned to the sequence in any order since it can contain no block of the $\STS(v)$. We use a greedy algorithm, having first removed the points of the independent set and after carefully choosing the start of the sequence.
   
Let $V$ be the points of $S$ and let $T=V\setminus I$. We begin by placing the first $v-11$ points in the sequence, all chosen from $T$. Choose a point $a\in T$ arbitrarily, and choose a block containing $a$ and wholly contained in $T$. (Since there are only 8 points in $I$, the point $a$ can be in a maximum of 8 blocks containing a point of $I$, so we can certainly do this.) Say the chosen block is $\{a,b,e\}$. Now choose another point $c\in T$ arbitrarily, and a further point $d\in T$ with the restrictions that $d \notin \{a.c,b.c,c.e\}$. Begin the sequence with $a,b,c,d,e$.

Now choose a further point $f \in T$ with the restrictions that $f \notin \{c.d,c.e,d.e\}$. Continue in this manner choosing points from $T$; it is clear that provided we still have at least 4 points in $T$ to choose from, there is an available point to continue the sequence at every stage. Stop the process when there are only 3 points $x,y,z$ left in $T$. (It is possible that some or all of $x,y,z$ could still be adjoined to the sequence but that is not important.) Suppose the last 3 points added to the sequence are $p,q,r$.

Because $a.b=e$, at least one of $x,y,z$ can be adjoined to the start of the sequence. Assume without loss that this is $z$; then adjoin $z$ to the start and $x,y$ to the end of the sequence, leaving 8 spaces for the points of $I$ to be inserted. The partial sequence now is
\[ z,a,b,c,d,e,f,\ldots,p,q,r,*,*,*,*,*,*,*,*,x,y \]
with the asterisks to be replaced by the 8 points of $I$.

For the leftmost asterisk, there are 8 available points of $I$ from which we must exclude $p.q$, $p.r$ and $q.r$. So there are at least 5 available choices and we select a point $s$ from these to continue the sequence. Similarly, for the next point $t$ we have 7 points of $I$ remaining and $q.r$, $q.s$ and $r.s$ need to be excluded. The next point $u$ now has 6 available points remaining but this time only $r.s$ and $r.t$ need to be excluded because $s.t$ cannot be in the independent set $I$. Now the partial sequence is
\[ z,a,b,c,d,e,f,\ldots,p,q,r,s,t,u,*,*,*,*,*,x,y \]
and it remains to replace the last 5 asterisks with the remaining points of $I$. Begin at the rightmost asterisk; there are 5 available points and only $x.y$ must be excluded, so choose point $w$ from the remainder. To continue, there are 4 points available and $x.y$, $w.y$ and $w.x$ must be excluded, so a point $h$ can be chosen. For the next point there are 3 available points but only $h.x$ and $w.x$ are excluded so a point $g$ can be chosen. The partial sequence now is
\[ z,a,b,c,d,e,f,\ldots,p,q,r,s,t,u,*,*,g,h,w,x,y \]
and the remaining 2 points can be chosen in either order from the remaining points of $I$ to complete a 4-good sequencing of $S$.
\end{proof}
The second construction is more general.
\begin{theorem}\label{thm:col}
Let $\ell\geq 3$ and $c=(\ell^2-3\ell+6)/2$. Let $S$ be an $\STS(v)$ which admits a colouring in which every colour class, except possibly one, has at least $c$ points. Then $S$ admits an $\ell$-good sequencing.
\end{theorem}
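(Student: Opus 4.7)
The plan is to concatenate the colour classes into a single sequence, class by class, and to handle each transition between classes by a short greedy argument. The key observation is that each colour class is an independent set of $S$, so any window of $\ell$ consecutive positions lying entirely inside one class contains no block; only windows that straddle a class boundary can be problematic.

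I would label the colour classes $C_1, C_2, \ldots, C_r$ so that the exceptional class (of size less than $c$), if there is one, is placed first, guaranteeing $|C_i| \ge c$ for every $i \ge 2$. List the points of $C_1$ in any order and then, for each $i \ge 2$ in turn, append the points of $C_i$ one at a time by the rule: choose any unused point $x$ of $C_i$ with $x \ne y.z$ for every pair $\{y,z\}$ among the previous $\ell-1$ positions (or among all earlier positions if fewer than $\ell-1$ have been placed so far). This is exactly what is needed to avoid any block of span at most $\ell - 1$, which is in turn the condition for no length-$\ell$ window containing the new position to contain a block. Once the previous $\ell - 1$ positions lie entirely inside $C_i$, independence forces every $y.z$ out of $C_i$, so the forbidden set becomes empty and the rest of $C_i$ may be listed in any order.

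The main obstacle is to verify that the greedy rule always offers at least one valid choice during the first $\ell - 1$ critical placements in each class $C_i$ with $i \ge 2$. At the $k$-th such placement, the previous $\ell - 1$ positions split into $\ell - k$ points of $C_{i-1}$ and $k - 1$ points of $C_i$. Pairs wholly inside $C_i$ contribute no forbidden point in $C_i$ (by independence), leaving at most $\binom{\ell-k}{2} + (\ell-k)(k-1)$ forbidden points in $C_i$; combined with the $k-1$ points of $C_i$ already used, the number of unavailable points in $C_i$ is at most
\[
(k-1) + \binom{\ell-k}{2} + (\ell-k)(k-1).
\]
A short algebraic check (for instance at $k = 2$, where Pascal's identity gives $1 + \binom{\ell-2}{2} + (\ell-2) = 1 + \binom{\ell-1}{2}$) shows that this expression, maximised over $1 \le k \le \ell - 1$, equals $1 + \binom{\ell-1}{2} = c - 1$. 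Hence $|C_i| \ge c$ always leaves at least one valid point. The secondary subtlety that $|C_1|$ may be smaller than $\ell - 1$ fits into the same framework: the first $\ell - |C_1| - 1$ placements in $C_2$ precede the formation of a full length-$\ell$ window and involve strictly fewer pairs, while from the step corresponding to $k = \ell - |C_1|$ onward the analysis above and the bound $c - 1$ apply directly. The greedy procedure therefore completes, producing an $\ell$-good sequencing of $S$.
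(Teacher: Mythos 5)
Your proposal is correct and follows essentially the same route as the paper: concatenate the colour classes with the exceptional one first, and greedily choose the first $\ell-1$ points of each subsequent class so as to avoid the third points of pairs among the preceding $\ell-1$ positions. Your explicit bound $(k-1)+\binom{\ell-k}{2}+(\ell-k)(k-1)\leq\binom{\ell-1}{2}+1=c-1$ is just a cleaner, unified version of the paper's step-by-step count for $b_1,b_2,b_3$.
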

\begin{proof}
The strategy is to produce a sequence in which the colour classes are consecutive groups of points. Such a group can contain no blocks of the Steiner triple system since a colour class is an independent set. For each colour class we choose the first $\ell-1$ points in the sequence carefully, to avoid creating blocks with points of the previous colour class.

Begin by listing the points of the smallest colour class in any order. Let the last $\ell-1$ points in this sequence be $a_1,a_2,\ldots,a_{\ell-1}$. (If there happen to be fewer than $\ell-1$ points in the smallest colour class, then the following argument goes through with suitable minor amendments.) Choose any other colour class to continue the sequence; we now select the first $\ell-1$ points $b_1,b_2,\ldots,b_{\ell-1}$ to go in the sequence.

There are at least $c$ choices for the first point $b_1$. To avoid creating a block, we must ensure that $b_1$ does not form a block with any two of the points $a_1,a_2,\ldots,a_{\ell-1}$. Provided $c>\binom{\ell-1}{2}$ there is at least one suitable choice of $b_1$. Select $b_1$ freely from the list of suitable choices; now there are at least $c-1$ choices for $b_2$ and we must ensure that $b_2$ does not form a block with any pair from $a_2,\ldots,a_{\ell-1},b_1$. So as before, a maximum of $\binom{\ell-1}{2}$ points are ruled out, and provided $c-1>\binom{\ell-1}{2}$ there will be an available choice for $b_2$. For $b_3$ there are now $c-2$ elements to choose from, but this time only a maximum of $\binom{\ell-1}{2}-1$ are ruled out, since we know that $b_1$ and $b_2$ cannot form a block with any point in the same colour class. So provided $c-2>\binom{\ell-1}{2}-1$ there will be an available choice for $b_3$. From this point there are fewer restrictions; at every step the number of available remaining points in the colour class reduces by 1, but the number of points ruled out reduces more quickly since more of the possible pairs of the previous $\ell-1$ elements in the sequence lie wholly in the colour class. It is easy to see that if $c\geq \binom{\ell-1}{2}+2$ then there are enough choices to be able to select the first $\ell-1$ points from the colour class without creating blocks.

Once $\ell-1$ points have been added, the remaining points of the second colour class can be added to the sequence in any order. Now carry out the same procedure for the third and any subsequent colour classes, each of which has at least $c$ points.
\end{proof}
We note that the argument can be extended to deal with cyclically $\ell$-good sequencings. In its simplest form, if we have one colour class of cardinality at least $c+\ell-1=(\ell^2-\ell+4)/2$. and the remainder, apart from one, at least $c$, then we can use the largest colour class at the end of the sequence and make it join back to the start.

\section{Results}\label{sec:Results}
We are now in a position to prove the main result of this paper.
\begin{theorem}\label{thm:four}
Let $v\geq 13$. Then every $\STS(v)$ admits a 4-good sequencing.
\end{theorem}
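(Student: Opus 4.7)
The plan is to dispatch each admissible $v\geq 13$ by one of the two constructions from Section~\ref{sec:Constructions}. The base cases $v=13$ and $v=15$ admit only $2$ and $80$ non-isomorphic systems respectively, so a direct computer search can exhibit a 4-good sequencing in each. For all larger orders the goal is to satisfy the hypothesis of Theorem~\ref{thm:ind}, namely the existence of an independent set of cardinality~$8$ in every $\STS(v)$ with $v\geq 19$.

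The classical greedy argument for independent sets in a Steiner triple system --- iteratively adjoin a point to a partial independent set $I$, forbidding only the third points of pairs from $I$ under the quasigroup operation --- yields an independent set of size $k$ whenever $k+\binom{k}{2}<v$. Taking $k=8$ gives $v>36$, so Theorem~\ref{thm:ind} applies immediately to every $\STS(v)$ with $v\geq 37$.

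The chief obstacle is the intermediate band $v\in\{19,21,25,27,31,33\}$, where exhaustive enumeration is infeasible (already $\STS(19)$ has more than $10^{10}$ isomorphism classes) yet the bare greedy bound falls one shy of what is required. I would close this gap by sharpening the greedy count: in a small $\STS(v)$ many of the would-be forbidden third-points of pairs from $I$ must coincide, so a judicious choice of the first few seed points collapses several forbidden elements into one, and this should suffice to push the guaranteed independent set up to size~$8$. Where this sharpening is not enough for a particular order, the fallback is to show that every $\STS(v)$ of that order admits a colouring with at most one colour class of size below~$5$, whereupon Theorem~\ref{thm:col} with $\ell=4$, $c=5$ supplies the sequencing. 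I expect the real difficulty to be at $v=19$, where the block density is highest and the least structural slack is available; if neither of the above routes quite succeeds there, a hybrid argument --- partitioning the possible sizes and structures of a maximum independent set of cardinality $7$ and extending the construction of Theorem~\ref{thm:ind} by hand in each case --- is the natural last resort.
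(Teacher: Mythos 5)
Your skeleton matches the paper's: explicit sequencings for $v=13,15$, Theorem~\ref{thm:ind} via independent sets of size $8$ for large $v$, and a recognition that the intermediate orders are the crux. But the intermediate band is precisely where the content of the proof lies, and you do not close it. Your ``sharpened greedy'' is a hope, not an argument: for $v=21$ the plain greedy bound guarantees only an independent set of size $6$ (to pass from $6$ to $7$ points one needs $6+\binom{6}{2}=21<v$, which fails at $v=21$), and the fact that \emph{every} STS$(21)$ nevertheless contains an independent set of cardinality $8$ is a genuine theorem of Forbes, Grannell and Griggs \cite{FGG2}; likewise $v=25,27$ require Horak's result \cite{H}. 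No generic counting of coincidences among forbidden third points is known to bridge that gap, and the paper does not attempt to --- it cites these results. (Incidentally your greedy condition is off by one: the binding constraint for reaching size $8$ is $7+\binom{7}{2}=28<v$, i.e.\ $v\geq 29$, which already disposes of $31$ and $33$; the paper does exactly this for $v=31$ after starting from the Erd\H{o}s--Hajnal size-$7$ set.)

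The case $v=19$ is worse for your plan: Theorem~\ref{thm:ind} requires an independent set of size $8$ \emph{and} there exist STS$(19)$s whose independence number is only $7$, so the independent-set route cannot succeed in general, and a case analysis ``extending the construction by hand'' from a size-$7$ set has no evident path to completion. The paper's actual resolution is your ``fallback,'' but made precise by two nontrivial external facts you would need to supply: every STS$(19)$ is $3$-chromatic (a consequence of the full classification \cite{EIGHT}) and hence equitably $3$-colourable \cite{FGG1}, giving colour classes of sizes $7,6,6$, each at least $c=5$, so Theorem~\ref{thm:col} with $\ell=4$ applies. Without these citations (or equivalent arguments) for $v\in\{19,21,25,27\}$, the proposal is a correct reduction of the problem rather than a proof.
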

\begin{proof}
In \cite{EH}, see also \cite {TRIP}, Erd\H{o}s \& Hajnal proved that every $\STS(v)$ has an independent set of cardinality $\lfloor\sqrt{2v}\rfloor$. Thus every $\STS(v)$ with $v \geq 33$ has an independent set of cardinality 8. By the same result, every STS(31) has an independent set of cardinality 7, leaving 24 other points. The pairs of the independent set occur in $\binom{7}{2}=21$ blocks of the system and hence a further point can be adjoined to the independent set. It is also known that every STS(27) and STS(25) \cite {H} and every STS(21) \cite {FGG2} has an independent set of cardinality 8. Therefore by Theorem \ref{thm:ind}, every $\STS(v)$ with $v \geq 21$ admits a 4-good sequencing.    

To deal with the case $v=19$, use Theorem \ref{thm:col}. In that theorem if $\ell=4$ then $c=5$. Every STS(19) is 3-chromatic \cite{EIGHT} and therefore from Theorem 4.1 of \cite {FGG1} is equitably 3-colourable, i.e. can be coloured with colour classes of cardinalities 7, 6 and 6. The cases $v=15$ and $v=13$ are dealt with by exhibiting a 4-good sequencing for each system. This was done in \cite{KS} where it is also shown that neither the unique STS(7) nor the unique STS(9) admits a 4-good sequencing.
\end{proof}
Theorem \ref{thm:col} can be used to prove further results including the one below.
\begin{theorem}\label{thm:five}
Let $v\geq 15$. Then every 3-chromatic $\STS(v)$ admits a 5-good sequencing.
\end{theorem}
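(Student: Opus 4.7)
The plan is to apply Theorem~\ref{thm:col} with $\ell=5$. Substituting gives $c=(\ell^2-3\ell+6)/2=8$, so the hypothesis becomes the requirement that every colour class of the $\STS$, save possibly one, contains at least 8 points. A 3-chromatic $\STS(v)$ is equitably 3-colourable by Theorem~4.1 of~\cite{FGG1}, so the three classes of such a colouring have sizes $\lceil v/3\rceil$ and $\lfloor v/3\rfloor$ in some multiplicity. For $v\geq 25$ we have $\lfloor v/3\rfloor\geq 8$, so every colour class already meets the threshold, and Theorem~\ref{thm:col} immediately delivers a 5-good sequencing. This settles all admissible orders except $v\in\{15,19,21\}$.

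For these three residual orders, the equitable 3-colourings give class sizes $(5,5,5)$, $(7,6,6)$ and $(7,7,7)$ respectively, none of which satisfies $c=8$ on even two of the three classes; moreover, for $v=15$ it is arithmetically impossible to produce two colour classes of size at least $8$, so no rearrangement of the colouring can rescue the approach. The plan for these orders is therefore to finish by direct (computer) search, exhibiting a 5-good sequencing of each non-isomorphic 3-chromatic $\STS(v)$. The number of systems to check is small---there are only 7 non-isomorphic $\STS(15)$'s and the counts of 3-chromatic systems for $v=19$ and $v=21$ are still modest---so this is a bounded computation entirely in the spirit of the results reported in Section~\ref{sec:Computations}.

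The main obstacle is precisely this small-order range: Theorem~\ref{thm:col} narrowly fails to apply for $v\leq 21$, and no straightforward tweak of the equitable colouring recovers the missing threshold of $c=8$ on two classes. Above $v=21$ the argument is essentially automatic, the non-trivial ingredients being the equitable colourability of 3-chromatic Steiner triple systems from~\cite{FGG1} and Theorem~\ref{thm:col} itself.
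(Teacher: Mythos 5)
The proposal hinges on a claim that is not available: that \emph{every} 3-chromatic $\STS(v)$ is equitably 3-colourable by Theorem~4.1 of~\cite{FGG1}. That result is invoked in this paper only for $\STS(19)$, and no such statement is known for general $v$; the paper's own proof is organised precisely around its absence. For $v=21$ the best one has from \cite{FGG1} is that the colour classes have cardinalities $(7,7,7)$ or $(8,7,6)$, and for $v=25$ and $27$ the only control is the Haddad--R\"{o}dl inequality \cite{HR} together with $c_1\leq(v-1)/2$, which for $v=25$ still permits the distribution $(11,7,7)$ --- a case where Theorem~\ref{thm:col} fails and a bespoke block-counting argument is needed to find a block $\{B_i,B_j,A_k\}$ that relaxes the cardinality requirement from $8$ to $7$. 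So your ``automatic'' range $v\geq 25$ does not go through. What the paper actually does for $v\geq 31$ is different in kind: it combines the Erd\H{o}s--Hajnal lower bound \cite{EH} on an independent set with the Sauer--Sch\"{o}nheim upper bound \cite{SS} to show that some colour class has size between $8$ and roughly $v/2$, whence the remaining two classes contain at least $(v-1)/2\geq 15$ points and one of them also has size at least $8$; Theorem~\ref{thm:col} then applies with the third class as the permitted exception.

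The computational fallback is also not viable as described. Every $\STS(19)$ is 3-chromatic and there are $11{,}084{,}874{,}829$ of them \cite{EIGHT}; the paper does carry out this (far from modest) check. But the non-isomorphic $\STS(21)$s cannot be enumerated at all --- their number is unknown and vastly exceeds the $\STS(19)$ count --- so ``check the 3-chromatic systems by computer'' is not an available strategy for $v=21$, and a theoretical argument (the paper's counting of blocks of type $\{A_i,A_j,B_k\}$ across the colour classes) is unavoidable there. Finally, a smaller factual slip: there are $80$ non-isomorphic $\STS(15)$s, not $7$.
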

\begin{proof}
In Theorem \ref{thm:col}, if $\ell=5$ then $c=8$. Let $v \geq 31$. Then every $\STS(v)$ has an independent set, and therefore a colour class, $I$, whose cardinality satisfies
\begin{center}
$8 \leq |I| \leq (v+1)/2$ if $v \equiv 3$ or $7 \pmod{12}$ and\\
$8 \leq |I| \leq (v-1)/2$ if $v \equiv 1$ or $9 \pmod{12}.~~~~~$
\end{center}
The lower bound comes from the paper by Erd\H{o}s \& Hajnal \cite{EH} as used in the proof of the previous theorem. The upper bound is due to Sauer \& Sch\"{o}nheim \cite{SS}, see also \cite {TRIP}. In either case there are at least $(v-1)/2 \geq 15$ points in the other two colour classes and hence one of these has cardinality greater than or equal to 8. So by Theorem \ref{thm:col}, the $\STS(v)$ has a 5-good sequencing.

For $v \in \{21,25,27\}$ we use results on 3-chromatic $\STS(v)$. In \cite{HR}, Haddad \& R\"{o}dl proved that if an $\STS(v)$ is 3-chromatic and the cardinalities of the colour classes are $c_1$, $c_2$ and $c_3$ where $c_1 \geq c_2 \geq c_3$ then
\begin{center} 
$v=c_1+c_2+c_3 \geq ((c_1-c_2)^2+(c_2-c_3)^2+(c_3-c_1)^2)/2.$
\end{center}
From Lemma 2.3 of \cite{FGG1} we also have that for $v \geq 9, c_1 \leq (v-1)/2$.

For $v=27$, this gives the possible cardinalities of the colour classes $(c_1,c_2,c_3)$ to be $(12,9,6)$, $(12,8,7)$, $(11,10,6)$, $(11,9,7)$, $(11,8,8)$, $(10,10,7)$, $(10,9,8)$ or $(9,9,9)$ all of which satisfy Theorem \ref{thm:col}.

For $v=25$ the possibilities are $(11,8,6)$, $(11,7,7)$, $(10,10,5)$, $(10,9,6)$, $(10,8,7)$, $(9,9,7)$ or $(9,8,8)$, all of which again satisfy Theorem \ref{thm:col} except $(c_1,c_2,c_3)=(11,7,7)$.
However, in this case it is still possible to construct a 5-good sequencing. Let the three colour classes be $\mathcal{A}=\{A_1,A_2,\ldots,A_{11}\}$, $\mathcal{B}=\{B_1,B_2,\ldots,B_7\}$ and $\mathcal{C}=\{C_1,C_2,\ldots,C_7\}$. The maximum number of blocks of the form $\{B_i,B_j,C_k\}$ is 21 and likewise for blocks of the form $\{B_i,C_j,C_k\}$ but in this case there would be 84 pairs of the form $\{B_i,C_j\}$ whereas there can only be 49. So without loss of generality there exists a block of the form $\{B_i,B_j,A_k\}$, say $\{B_6,B_7,A_1\}$. Construct a 5-good sequencing as follows. Begin with listing the points of the set $\mathcal{B}$ in the order $B_1,B_2,\ldots,B_6,B_7$. Continue with points from the set $\mathcal{C}$ following the procedure in Theorem \ref{thm:col}. This will now be possible because the requirement on the cardinality of the set $\mathcal{C}$ is reduced from 8 to 7 because $B_6.B_7=A_1$. Complete the sequence by adjoining the points of the set $\mathcal{A}$, again using the procedure of Theorem \ref{thm:col}.           

For $v=21$, it was proved in \cite {FGG1} that every 3-chromatic STS(21) has a colouring with colour classes $(c_1,c_2,c_3)=(7,7,7)$ or $(8,7,6)$. Consider first the case where the colouring is equitable and let the three colour classes be $\mathcal{A}=\{A_1,A_2,\ldots,A_7\}$, $\mathcal{B}=\{B_1,B_2,\ldots,B_7\}$ and $\mathcal{C}=\{C_1,C_2,\ldots,C_7\}$. Elementary counting determines that if there are $n$ blocks of the form $\{A_i,A_j,B_k\}$ then there are also $n$ blocks of the form $\{B_i,B_j,C_k\}$ and $\{C_i,C_j,A_k\}$. There are $21-n$ blocks of the form $\{A_i,A_j,C_k\}$, $\{B_i,B_j,A_k\}$ and $\{C_i,C_j,B_k\}$. There are always 7 blocks of the form $\{A_i,B_j,C_k\}$. Suppose that $n=0$ or 21; without loss of generality we may assume the former. Construct a 5-good sequencing as follows. List the points of the set $\mathcal{A}$ in any order. Then adjoin the points from the set $\mathcal{B}$ to the front of the list of $\mathcal{A}$ points and points from the set $\mathcal{C}$ to the back of the list of $\mathcal{A}$ points following the procedure in Theorem \ref{thm:col}. The fact that there are no blocks of the form $\{A_i,A_j,B_k\}$ or $\{C_i,C_j,A_k\}$ again reduces the requirement on the cardinality of the sets $\mathcal{B}$ and $\mathcal{C}$ from 8 to 7 so completion of the sequence is guaranteed.

If $n \notin \{0,21\}$, then there exist distinct points $A_1,A_2,A_6,A_7$ such that there are blocks $\{A_1,A_2,C_k\}$ and $\{A_6,A_7,B_{k'}\}$. List the points of the set $\mathcal{A}$ in the order $A_1,A_2,A_3,A_4,A_5,A_6,A_7$ and proceed as above.

Secondly, consider the case where the STS(21) has a colouring with colour classes $(c_1,c_2,c_3)=(8,7,6)$. Let these be $\mathcal{C}=\{C_1,C_2,\ldots, C_8\}$, $\mathcal{B}=\{B_1,B_2,\ldots,B_7\}$ and $\mathcal{A}=\{A_1,A_2,\ldots,A_6\}$. Consider the pairs $A_i,A_j, 1 \leq i \leq j \leq 6$. If these are all in blocks of the form $\{A_i,A_j,B_k\}$, then any point in the colour class $\mathcal{C}$ can be moved to colour class $\mathcal{A}$ to give an equitable colouring and we proceed as above. Otherwise there is a block say $\{A_1,A_2,C_k\}$. List the points of the set $\mathcal{A}$ in the order $A_1,A_2,A_3,A_4,A_5,A_6$ and adjoin points from the set $\mathcal{B}$ to the front of the list of $\mathcal{A}$ points and points from the set $\mathcal{C}$ to the back of the list of $\mathcal{A}$ points as above. 

Finally, there are 11,084,874,829 non-isomorphic STS(19)s \cite{EIGHT} and we have checked by computer that all have a 5-good sequencing. Sequencings for the 80 non-isomorphic STS(15)s are given in the next section. 
\end{proof}

\section{Computations}\label{sec:Computations}
In this section we present some computational results. In particular we are interested in determining for individual systems the largest value of $\ell$ for which the system has an $\ell$-good sequencing and a cyclic $\ell$-good sequencing. We will call such sequencings best possible.

From Theorem \ref{thm:cyc3}, the unique STS(7) and STS(9) have a cyclic 3-good sequencing. It was shown in \cite{KS} that they do not have a 4-good sequencing.
There are two non-isomorphic STS(13)s. One of these is cyclic and on the set $\mathbb{Z}_{13}$ can be generated from the blocks $\{0,1,4\}$ and $\{0,2,7\}$ by the mapping $i \mapsto i+1$ (mod 13). The other is obtained by replacing the blocks $\{1,2,5\}$, $\{1,3,8\}$, $\{3,5,10\}$ and $\{8,10,2\}$ by the blocks $\{3,8,10\}$, $\{2,5,10\}$, $\{1,2,8\}$ and $\{1,3,5\}$. In both cases the sequence 0,1,2,3,4,5,6,7,8,9,10,11,12 is a cyclic 4-good sequencing; the best possible, see Theorem \ref{thm:max}.

There are 80 non-isomorphic STS(15)s. In \cite{SV}, 4-good sequencings were listed for all of these. Below we give best possible sequencings. Of these, all except for systems 1 to 7, 14 and 16 are cyclically 5-good. The remainder are only 5-good though also cyclically 4-good. The systems are in the standard listing as given in Table 1.28 of \cite{HAND}. It is certainly worthy to note that the 9 exceptional systems which do not have cyclically 5-good sequencings all contain an STS(7) subsystem including all 7 systems which contain 3 or more STS(7) subsystems. This includes of course the projective STS(15) (\#1 in the listing).   

\vspace*{2ex}
\noindent
{\small\setlength{\tabcolsep}{2pt}
\begin{tabular}{llllllll}
1. & 04579aed283b16c & 
2. & 023758419cd6eba & 
3. & 023758419dc6bea & 
4. & 023758419dc6eba\\ 
5. & 073529a6edbc841 & 
6. & 073528b1c9ade46 & 
7. & 0237584d6e9b1ac & 
8. & 037528194ebdc6a\\ 
9. & 057329418eb6dca& 
10. & 053728169be4dca& 
11. & 037528169be4cda& 
12. & 081637a94ceb25d\\
13. & 057328196becd4a& 
14. & 0275384cde9a1b6 & 
15. & 037258194dcbe6a & 
16. & 07352cb19aed846\\
17. & 0a2756e43b198cd & 
18. & 06937421eab5d8c & 
19. & 04926b1c78d3a5e & 
20. & 0714589a6ceb23d\\
21. & 082537c6a9e1b4d & 
22. & 038527b14ae9d6c & 
23. & 052394ade8b617c & 
24. & 084512cb7e9d36a\\
25. & 0145786a2dbe3c9 & 
26. & 04517863aceb92d & 
27. & 0725384cbae916d & 
28. & 045926abcd83e17\\ 
29. & 09746a8c5d12e3b & 
30. & 0275386d19ca4be & 
31. & 07415829abde36c & 
32. & 023954c718de6ab\\ 
33. & 07145829abdc63e & 
34. & 07145829abdc36e & 
35. & 01639a472ced5b8 & 
36. & 01549a682bde3c7\\ 
37. & 05914ca78e26d3b & 
38. & 0425916d83b7ace & 
39. & 07316829adbe45c & 
40. & 092456adbc7318e\\ 
41. & 0467258deba31c9 & 
42. & 0475186dec93ab2 & 
43. & 028697d5bc413ae & 
44. & 05194a36db7c28e\\ 
45. & 0254763e1b9ac8d & 
46. & 04627c1839ea5bd & 
47. & 03716859adb42ec & 
48. & 057298de6c34b1a\\ 
49. & 02457e3619cab8d & 
50. & 071542d3b8ac96e & 
51. & 04591a6e38c7bd2 & 
52. & 017638429ecbd5a\\ 
53. & 017638429ecbd5a & 
54. & 061738492ec5dba & 
55. & 0593261c78da4eb & 
56. & 035294ed68cb17a\\ 
57. & 082391a45db7ec6 & 
58. & 0593261c78da4eb & 
59. & 02495aedc83176b & 
60. & 05841ed9a7b6c32\\ 
61. & 05418a2cbe6d379 & 
62. & 0571483c6e9ad2b & 
63. & 02735b6de8914ac & 
64. & 0258417deba6c93\\ 
65. & 05741d3e9ac68b2 & 
66. & 04726853cae9b1d & 
67. & 0145783ce29d6ab & 
68. & 095246eabc7831d\\ 
69. & 0328647dc9b1a5e & 
70. & 054279abc6d138e & 
71. & 042758c3de91b6a & 
72. & 04725619b3ec8da\\ 
73. & 01457839e2cd6ab & 
74. & 01367852dac94eb & 
75. & 04581263adb79ec & 
76. & 04517a62d3ce9b8\\ 
77. & 0425761c39dea8b & 
78. & 084157bceda2369 & 
79. & 085326de7c9b41a & 
80. & 0732658bcd94e1a\\ 
\end{tabular}
}

\vspace*{2ex}
As stated in the proof of Theorem \ref{thm:five}, we have determined that all STS(19)s have a 5-good sequencing. But the best possible is for an STS(19) to have a cyclic 6-good sequencing. We have not checked all the systems but we do have results for the four cyclic STS(19)s. These are referenced as in \cite{MPR}.
We give the starter blocks on the set $\mathbb{Z}_{19}$ from which the systems can be generated by the mapping $i \mapsto i+1$ (mod 19) as well as a best sequencing of the points. The numbers 10 to 19 are represented by the letters a to i respectively. The sequencings for systems A1 and A2 are cyclically 6-good but those for systems A3 and A4 are only 6-good though also cyclically 5-good. The system A4 belongs to a class of Steiner triple systems called Netto systems. These are one of only two classes of systems in which a permutation group acts 2-homogeneously but not 2-transitively on the points \cite{DDST}, see also \cite{TRIP} and provides further evidence that maximum (cyclic) goodness sequencing is not necessarily associated with structural properties or a high degree of symmetry of the Steiner triple system.

\vspace*{2ex}
\noindent
{\small
\begin{tabular}{lll}
A1: & $\{0,1,4\}, \{0,2,9\}, \{0,5,11\}$. &Sequencing 02468acegi13579bdfh.\\     
A2: & $\{0,1,4\}, \{0,2,12\}, \{0,5,13\}$. &Sequencing 02468acegi13579bdfh.\\     
A3: & $\{0,1,8\}, \{0,2,5\}, \{0,4,10\}$. &Sequencing 013475egb8fhc9d2ia6.\\     
A4: & $\{0,1,8\}, \{0,2,5\}, \{0,4,13\}$. &Sequencing 013457di8bc9fhg2ea6.\\     
\end{tabular}
}

\vspace*{2ex}
For someone with access to a powerful computer system and large amounts of CPU time, an analysis of the best possible sequencings of points for both goodness and cyclic goodness for all STS(19)s might be of interest and provide an addition to the already existing paper on their properties \cite {EIGHT}.

We also have results for STS(21)s. There are seven non-isomorphic cyclic systems, again referenced as in \cite{MPR}. Again for completeness we give starter blocks from which on the set $\mathbb{Z}_{21}$, and together with the block $\{0,7,14\}$, the systems can be generated by the mapping $i \mapsto i+1$ (mod 21). We also give a best possible sequencing of the points with the numbers 10 to 21 being represented by the letters a to k respectively. The sequencings for all seven systems are cyclically 6-good but only that for system C2 is also 7-good, the maximum from Theorem \ref{thm:max}.

\vspace*{2ex}
\noindent
{\small
\begin{tabular}{lll}
C1:& $\{0,1,3\}, \{0,4,12\}, \{0,5,11\}$. &Sequencing 012567ac3j4fkdb8ighe9.\\     
C2:& $\{0,1,3\}, \{0,4,12\}, \{0,5,15\}$. &Sequencing 01hfadj9i5gk6c42b783e.\\     
C3:& $\{0,1,5\}, \{0,2,10\}, \{0,3,9\}$. &Sequencing 01234deacf7hji8596bkg.\\     
C4:& $\{0,1,5\}, \{0,2,10\}, \{0,3,15\}$. &Sequencing 012349ak78jfbich56egd.\\ 
C5:& $\{0,1,5\}, \{0,2,13\}, \{0,3,9\}$. &Sequencing 01234bck7adf86hi59egj.\\     
C6:& $\{0,1,9\}, \{0,2,5\}, \{0,4,10\}$. &Sequencing 0123489afgjhdc675ibke.\\     
C7:& $\{0,1,9\}, \{0,2,5\}, \{0,4,15\}$. &Sequencing 0123489ig5cb7h6aejfkd.\\
\end{tabular}
}

\vspace*{2ex}
We also checked the six known 4-chromatic STS(21)s \cite{FGG1} and found that all are cyclically 6-good but none are 7-good. In addition we generated 1000 STS(21)s at random and checked their good sequencing properties. All were cyclically 6-good and all but one were 7-good; but none were cyclically 7-good. We were unable to find an STS(21) with a cyclically 7-good sequencing, but in view of Theorem \ref{thm:cycmax} such a system may not exist.  However if it does then it would be good to find one.

\end{document}